\newcommand\blfootnote[1]{%
  \begingroup
  \renewcommand\thefootnote{}\footnote{#1}%
  \addtocounter{footnote}{-1}%
  \endgroup
}
\title{Analytic Bertini theorem}
\author{Mingchen Xia}
\begin{document}

\maketitle

\begin{abstract}
We prove an analytic Bertini theorem, generalizing a previous result of Fujino and Matsumura.
\end{abstract}

\blfootnote{
\textbf{\textit{Keywords---}}Bertini theorem, multiplier ideal sheaf, pluri-subharmonic function, Hodge metric
}

\blfootnote{\textbf{\textit{MSC}}:	32U15, 	32Q15
}
 \normalsize 

\section{Introduction}
Let $X$ be a connected complex projective manifold of dimension $n\geq 1$. Given any base-point free linear system $\Lambda$ on $X$, it follows from the classical Bertini theorem \cite{Jou80} that a general hyperplane $H$ of $\Lambda$ is smooth. 
Let $\varphi$ be a quasi-plurisubharmonic (quasi-psh) function on $X$. For a general member $H\in \Lambda$, the multiplier ideal sheaf $\mathcal{I}(\varphi|_H)$ makes sense.
It is natural to wonder if 
\begin{equation}\label{eq:IresH}
\mathcal{I}(\varphi|_H)=\mathcal{I}(\varphi)|_H
\end{equation}
holds for general $H$. It is well-known that the $\mathcal{I}(\varphi|_H)\subseteq \mathcal{I}(\varphi)|_H$ direction always holds for a general $H$, as a consequence of the Ohsawa--Takegoshi $L^2$-extension theorem. 
Conversely, it is easy to construct examples such that the set $\mathcal{B}$ of $H\in \Lambda$ where the equality fails is not contained in any proper Zariski closed subset of $\Lambda$. A natural question arises: is the set $\mathcal{B}$ small in a suitable sense? This kind of problem was first studied by Fujino and Matsumura, see \cite{FM21} and \cite{Fuj18}. They proved that the complement of $\mathcal{B}$ is dense with respect to the complex topology of $\Lambda$ (regarded as a projective space). More recently, Meng--Zhou \cite{MZ21} proved that the complement of $\mathcal{B}$ has zero Lebesgue measure.
In this paper, we prove the following refinement:
\begin{theorem}\label{thm:main}
There is a pluripolar set $\Sigma\subseteq \Lambda$ such that for all $H\in \Lambda\setminus \Sigma$, $H$ is smooth and \eqref{eq:IresH} holds. 
\end{theorem}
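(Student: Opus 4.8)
The plan is to recast \eqref{eq:IresH} as a base change statement for an $L^2$ (Hodge) metric on a direct image sheaf over $\Lambda$. Write $\Lambda=\mathbb P(V)$ with $V\subseteq H^0(X,L)$ and form the incidence variety $\mathcal H=\{(x,[s])\in X\times\Lambda: s(x)=0\}$, with projections $q\colon\mathcal H\to X$ and $p\colon\mathcal H\to\Lambda$. Base-point-freeness makes $q$ a smooth morphism (a projective bundle) and $p$ a projective surjection whose fibre over $H$ is $H$. Since $q$ is a submersion, $\mathcal I(q^*\varphi)=q^*\mathcal I(\varphi)$, and under the identification $p^{-1}(H)\cong H$ one identifies $\mathcal I(q^*\varphi)|_{p^{-1}(H)}$ with $\mathcal I(\varphi)|_H$ and $\mathcal I\bigl((q^*\varphi)|_{p^{-1}(H)}\bigr)$ with $\mathcal I(\varphi|_H)$; thus \eqref{eq:IresH} is precisely the restriction formula for the quasi-psh function $q^*\varphi$ along the fibres of $p$. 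The non-smooth members form a proper Zariski closed (hence pluripolar) subset by the classical Bertini theorem; the members with $\varphi|_H\equiv -\infty$ form a pluripolar subset (there are none when $n\ge 2$, since a positive-dimensional compact complex submanifold is never pluripolar, and the case $n=1$ is elementary); and for the remaining $H$ the inclusion $\mathcal I(\varphi|_H)\subseteq\mathcal I(\varphi)|_H$ is the Ohsawa--Takegoshi direction already discussed. So the task is to exhibit a pluripolar set outside which the reverse inclusion $\mathcal I(\varphi)|_H\subseteq\mathcal I(\varphi|_H)$ holds.

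Fix an ample line bundle $A$ on $X$ and an integer $m\gg 0$, and give $q^*A^{\otimes m}$ the metric $(q^*h_A)^{\otimes m}e^{-q^*\varphi}$; for $m$ large this is semipositively curved (as $\varphi$ is quasi-psh and $A$ ample) and its multiplier ideal is $\mathcal I(q^*\varphi)$. Consider the torsion free coherent direct image
\[
 \mathcal E:=p_*\bigl(\mathcal O_{\mathcal H}(K_{\mathcal H/\Lambda})\otimes q^*A^{\otimes m}\otimes\mathcal I(q^*\varphi)\bigr).
\]
By the positivity theorems of Berndtsson and Berndtsson--P\u{a}un, $\mathcal E$ carries a canonical singular Hermitian ($L^2$, i.e.\ Hodge) metric with Griffiths (in fact Nakano) semipositive curvature, whose fibre over a general $H$ is $\|u\|_H^2=\int_H|u|^2e^{-\varphi}$. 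Choosing $m$ large enough that, uniformly in $H\in\Lambda$, the sheaf $\mathcal I(\varphi)|_H\otimes K_H\otimes A^{\otimes m}|_H$ is globally generated and the relevant higher cohomology vanishes --- possible since the members form a bounded family and $L$ is fixed, via a relative Nadel vanishing on $\mathcal H\to\Lambda$ --- the theorem on cohomology and base change identifies $\mathcal E\otimes k(H)$ with $H^0\bigl(H,K_H\otimes A^{\otimes m}|_H\otimes\mathcal I(\varphi)|_H\bigr)$ away from a Zariski closed subset of $\Lambda$.

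Let $\Sigma\subseteq\Lambda$ be the union of: the non-smooth locus and the $\{\varphi|_H\equiv-\infty\}$ locus above; the Zariski closed set where base change fails and the analytic set of codimension $\ge 2$ where $\mathcal E$ is not locally free; and the locus where the Hodge metric on $\mathcal E$ is singular. The crucial point is that this last locus is pluripolar: a singular Hermitian metric of Griffiths semipositive curvature is, off a pluripolar set, finite and non-degenerate --- this is seen by passing to the dual metric and to determinants, using that in a holomorphic frame the diagonal entries of the dual metric and the determinant of the induced metric on $\det\mathcal E$ are of the form $e^{\pm\psi}$ with $\psi$ psh. Hence $\Sigma$ is pluripolar. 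For $H\notin\Sigma$ the Hodge metric is an honest inner product on $\mathcal E\otimes k(H)$, so $\int_H|u|^2e^{-\varphi}<\infty$ for every global section $u$ of $K_H\otimes A^{\otimes m}|_H\otimes\mathcal I(\varphi)|_H$; since such sections generate this sheaf, $\mathcal I(\varphi)|_H\otimes K_H\otimes A^{\otimes m}|_H\subseteq\mathcal I(\varphi|_H)\otimes K_H\otimes A^{\otimes m}|_H$, i.e.\ $\mathcal I(\varphi)|_H\subseteq\mathcal I(\varphi|_H)$. Combined with the Ohsawa--Takegoshi inclusion, \eqref{eq:IresH} holds for all $H\in\Lambda\setminus\Sigma$, and $H$ is smooth there.

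The main obstacle I anticipate is making the third paragraph rigorous in the genuinely singular regime, where $\varphi$ is an arbitrary quasi-psh function and $\mathcal I(q^*\varphi)$ a proper multiplier ideal: one must show that the a priori only almost-everywhere defined $L^2$-metric on $\mathcal E$ extends to a singular Hermitian metric with Griffiths semipositive curvature (this is the heart of the Berndtsson--P\u{a}un circle of ideas and uses Ohsawa--Takegoshi estimates), that its degeneracy locus is pluripolar, and --- the subtle part --- that this degeneracy locus really captures the failure of $\int_H|u|^2e^{-\varphi}<\infty$, i.e.\ that the extended metric agrees with the honest $L^2$-integral wherever the latter is finite. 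The remaining difficulties are more routine: the uniformity in $H$ of the generation and vanishing statements, and the bookkeeping (a finite cover of $X\times\Lambda$) reducing the sheaf inclusion to finitely many generators while keeping all exceptional loci pluripolar.
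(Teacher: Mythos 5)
Your proposal is correct and follows essentially the same route as the paper's proof of \cref{thm:Bert}: the incidence variety with its two projections, the identity $\mathcal I(q^*\varphi)=q^*\mathcal I(\varphi)$ coming from smoothness of $q$ (the paper's \cref{lma:pull-backmis}), the direct image of (relative canonical)\,$\otimes$\,(twist)\,$\otimes$\,$\mathcal I(q^*\varphi)$ equipped with its positively curved Hodge metric, generic identification of its fibre with $H^0(H,\omega_H\otimes(\cdot)\otimes\mathcal I(\varphi)|_H)$ by cohomology and base change, and pluripolarity of the degeneracy locus via the determinant of the (dual) metric; the ``subtle point'' you flag at the end --- that the extended metric really is the honest $L^2$ integral, with value $+\infty$ detecting failure of membership in $\mathcal I(\varphi|_H)$ --- is exactly what \cite[Lemma~22.1]{HPS18} supplies, and is used the same way in the paper. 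The one place you genuinely diverge is the passage from the global-sections statement back to the sheaf identity \eqref{eq:IresH}. The paper runs the whole argument for the countable family of twists $\mathcal L\otimes\mathcal S^{\otimes a}$, $a\ge 0$, and detects a strict inclusion $\mathcal I(\varphi|_H)\subsetneq\mathcal I(\varphi)|_H$ by Serre vanishing with $a$ depending on $H$, concluding by a countable intersection of co-pluripolar sets. You instead fix a single $m$ and invoke uniform global generation of $\omega_H\otimes A^{\otimes m}|_H\otimes\mathcal I(\varphi)|_H$: since every section in the fibre of $\mathcal E$ is $L^2$ against $e^{-\varphi}$ wherever the Hodge metric is nondegenerate, the generators lie in the ideal subsheaf and force $\mathcal I(\varphi)|_H\subseteq\mathcal I(\varphi|_H)$. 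This variant does work and saves the countable intersection, but the uniformity should be justified cleanly: it follows because $\mathcal I(\varphi)\otimes\omega_X\otimes L\otimes A^{\otimes m}$ is a \emph{fixed} coherent sheaf on $X$, globally generated for $m\gg 0$ by Serre, and global generation restricts to every $H$; no ``relative Nadel vanishing'' is needed, as generic base change over a dense Zariski open subset of $\Lambda$ (the paper's Step~1.2, via Koll\'ar torsion-freeness) already suffices.
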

This result affirmatively answers a problem of Boucksom, see \cite[Question~1.2]{Fuj18}. From the point of view of pluripotential theory, this theorem is quite natural: a small set in pluripotential theory just means a pluripolar set. As shown in \cite[Example~3.12]{FM21}, the exceptional set is not contained in a countable union of proper Zariski closed subsets in general, so \cref{thm:main} seems to be the optimal result. We also prove a more general analytic Bertini type result for fibrations \cref{cor:fibra}.

Let us mention a key advantage of \cref{thm:main}: our theorem can be applied to a countable family of quasi-psh functions at the same time, see \cref{cor:Irestunif}. This corollary makes it possible to perform induction on the dimension when studying psh singularities. 

\textbf{Acknowledgements}. I would like to thank Osamu Fujino and Sébastien Boucksom for comments on the draft.

I am indebted to the referees for various helpful remarks. In particular, one referee pointed out that equality holds in \cref{lma:pull-backmis}, which simplifies our proof of the main theorem.

\section{Analytic Bertini theorem}

In this section, varieties or algebraic varieties mean reduced separated schemes of finite type over $\mathbb{C}$.

\begin{definition}
Let $Y$ be a complex projective manifold. A subset $A\subseteq Y$ is 
\begin{enumerate}
    \item \emph{co-pluripolar} if $Y\setminus A$ is pluripolar. When $\dim Y=1$, we also say $A\subseteq Y$ is co-polar.
    \item \emph{co-meager} if $Y\setminus A$ is contained in a countable union of proper Zariski closed sets.
\end{enumerate}
We say a condition in $y\in Y$ is satisfied \emph{quasi-everywhere} if there is a co-pluripolar subset $Y_0\subseteq Y$ such that the condition is satisfied for $y\in Y_0$.
\end{definition}
Clearly, a co-meager set is co-pluripolar. Both classes are preserved by countable intersections.

\begin{lemma}\label{lma:pull-backmis}
Let $\pi:Y\rightarrow X$ be a smooth morphism of smooth algebraic varieties. Let $\varphi$ be a quasi-plurisubharmonic function on $X$, then 
\begin{equation}\label{eq:missmp}
\pi^*\mathcal{I}(\varphi)= \mathcal{I}(\pi^*\varphi)\,.
\end{equation}
\end{lemma}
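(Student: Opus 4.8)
The statement is local on both $X$ and $Y$, so I would work in a neighbourhood of a point $y \in Y$ mapping to $x = \pi(y)$. Since $\pi$ is smooth, after shrinking I may assume $Y \cong X \times \Delta^k$ with $\pi$ the first projection (or at least that $\pi$ has a local holomorphic section and local product coordinates), where $\Delta^k$ is a polydisc. The germ of $\mathcal{I}(\pi^*\varphi)$ at $y$ consists of germs $f \in \mathcal{O}_{Y,y}$ with $|f|^2 e^{-\pi^*\varphi}$ locally integrable near $y$, and similarly $\mathcal{I}(\varphi)_x$ is cut out by local integrability of $|g|^2 e^{-\varphi}$ near $x$; the pulled-back ideal $\pi^*\mathcal{I}(\varphi)_y$ is generated by the pullbacks $\pi^*g$ of such $g$.

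For the inclusion $\pi^*\mathcal{I}(\varphi) \subseteq \mathcal{I}(\pi^*\varphi)$: if $|g|^2 e^{-\varphi}$ is integrable near $x$, then on a product neighbourhood $U \times \Delta^k$, Fubini gives that $|\pi^*g|^2 e^{-\pi^*\varphi} = |g(z)|^2 e^{-\varphi(z)}$ (independent of the polydisc variable) is integrable over $U \times \Delta^k$, so $\pi^*g \in \mathcal{I}(\pi^*\varphi)_y$; since these generate, the inclusion follows.

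The reverse inclusion $\mathcal{I}(\pi^*\varphi) \subseteq \pi^*\mathcal{I}(\varphi)$ is the substantive direction. Take $f \in \mathcal{O}_{Y,y}$ with $|f|^2 e^{-\pi^*\varphi}$ integrable near $y$. Using the product coordinates, expand $f$ as a power series $f = \sum_{\alpha} f_\alpha(z)\, w^\alpha$ in the polydisc variable $w \in \Delta^k$, with $f_\alpha \in \mathcal{O}_{X,x}$. The key point is that, because $\pi^*\varphi$ does not depend on $w$, integrability of $|f|^2 e^{-\pi^*\varphi}$ over $U \times \Delta^k_r$ forces — by orthogonality of the monomials $w^\alpha$ on circles, i.e. a mean-value/Parseval argument in the $w$-variables followed by Fubini in $z$ — each coefficient $f_\alpha$ to satisfy $|f_\alpha|^2 e^{-\varphi}$ integrable near $x$, hence $f_\alpha \in \mathcal{I}(\varphi)_x$. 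Then $f = \sum_\alpha (\pi^*f_\alpha) w^\alpha$ exhibits $f$ as a convergent combination of elements of $\pi^*\mathcal{I}(\varphi)_y$; one must check this lies in the finitely generated module $\pi^*\mathcal{I}(\varphi)_y$, which follows since $\mathcal{I}(\varphi)_x$ is coherent (finitely generated) and the series converges in the local ring, together with the Artin–Rees / completeness properties of $\mathcal{O}_{Y,y}$ — or more simply, one truncates the series and controls the tail. The main obstacle is precisely this last bookkeeping: passing from "each Taylor coefficient lies in $\mathcal{I}(\varphi)$" to "$f$ lies in the pulled-back ideal" cleanly, and handling the case where $\pi$ is not literally a product but only smooth, which is dispatched by the local structure theorem for smooth morphisms (étale-locally a projection). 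I expect the Parseval-in-$w$ step to be routine once the coordinates are set up, and the coherence/finite-generation wrap-up to be the only place requiring care.
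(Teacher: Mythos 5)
Your proposal is correct, and it proves both inclusions, whereas the paper only writes out one of them. The easy inclusion $\pi^*\mathcal{I}(\varphi)\subseteq\mathcal{I}(\pi^*\varphi)$ is handled exactly as in the paper (reduce to a local product and apply Fubini; the paper additionally factors a smooth morphism through an \'etale map and uses the change-of-variables formula, which is equivalent to your analytic-local product reduction via the implicit function theorem — legitimate here since multiplier ideals are analytic-local objects, as the paper itself remarks). For the substantive inclusion $\mathcal{I}(\pi^*\varphi)\subseteq\pi^*\mathcal{I}(\varphi)$ the paper simply cites Demailly (Proposition~14.3 of \cite{Dem12}); your fiberwise Taylor expansion with Parseval in the polydisc variables, showing each coefficient $f_\alpha$ lies in $\mathcal{I}(\varphi)_x$, is precisely the standard argument behind that citation. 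The only step you leave slightly vague is the passage from ``all coefficients lie in the ideal'' to ``$f$ lies in the ideal'': the clean way to phrase it is that $f-\sum_{|\alpha|\le p}f_\alpha w^\alpha$ lies in $(w_1,\dots,w_k)^{p+1}\subseteq\mathfrak{m}_y^{p+1}$, so $f\in\bigcap_p\bigl(J+\mathfrak{m}_y^{p+1}\bigr)=J$ with $J=\mathcal{I}(\varphi)_x\cdot\mathcal{O}_{Y,y}$, by Krull's intersection theorem in the Noetherian local ring $\mathcal{O}_{Y,y}$; ``convergence of the series in the local ring'' is not by itself the right notion. With that precision supplied, your argument is complete and self-contained.
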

Here $\mathcal{I}(\varphi)$ denotes the multiplier ideal sheaf of $\varphi$ in the sense of Nadel. Observe that as $\pi$ is flat, $\pi^*\mathcal{I}(\varphi)$ is a subsheaf of $\mathcal{O}_Y$, so in \eqref{eq:missmp} equality makes sense, the two sheaves are actually equal, not just isomorphic.

\begin{proof}
As pointed out by the referee, $\pi^*\mathcal{I}(\varphi)\supseteq  \mathcal{I}(\pi^*\varphi)$ is proved in \cite[Proposition~14.3]{Dem12}. So it suffices to prove the reverse inclusion.

By decomposing $\pi$ into the composition of an étale morphism and a projection locally, it suffices to deal with the two cases separately. Fix a local section $f$ of $\mathcal{I}(\varphi)$.

Assume that $\pi:X\times \mathbb{C}^n\rightarrow X$ is the natural projection. Fix a volume form $\mathrm{d}V$ on $X$.
Take the product volume form $\mathrm{d}V\otimes \mathrm{d}\lambda$ on $X\times \mathbb{C}^n$, where $\mathrm{d}\lambda$ denotes the Lebesgue measure. It follows from Fubini theorem that $|\pi^*f|^2e^{-\pi^*\varphi}$ is locally integrable with respect to $\mathrm{d}V\otimes \mathrm{d}\lambda$.

Now assume that $\pi$ is étale. The change of variable formula shows that $|\pi^*f|^2e^{-\pi^*\varphi}$ is locally integrable.
\end{proof}

In \cref{lma:pull-backmis},
we do not really need the algebraic structures on $X$ and $Y$. For general complex manifolds, it suffices to apply the co-area formula.

We recall the notion of positive metrics on a torsion-free coherent sheaf.
\begin{definition}
Let $X$ be a smooth complex algebraic variety. Let $\mathcal{F}$ be a torsion-free (algebraic) coherent sheaf on $X$. Let $Z\subseteq X$ be the smallest Zariski closed set such that $\mathcal{F}|_{X\setminus Z}=\mathcal{O}_{X\setminus Z}(F)$ for some vector bundle $F$ on $X\setminus Z$. A \emph{singular Hermitian metric} (resp. \emph{positive singular Hermitian metric}) on $\mathcal{F}$ is a singular Hermitian metric (resp. Griffiths positively curved singular Hermitian metric) on $F$ in the sense of \cite{Rau15}. 
\end{definition}

\begin{theorem}\label{thm:Bert}
Let $X$ be a connected projective manifold of dimension $n\geq 1$. Let $\varphi$ be a quasi-plurisubharmonic function on $X$. Let $p:X\rightarrow \mathbb{P}^N$ be a morphism ($N\geq 1$). Define
\[
\mathcal{G}:=\left\{\,H\in |\mathcal{O}_{\mathbb{P}^N}(1)|: H':=H\cap X \text{ is smooth and } \mathcal{I}(\varphi|_{H'})=\mathcal{I}(\varphi)|_{H'}\,\right\}\,.
\]
Then $\mathcal{G}\subseteq |\mathcal{O}_{\mathbb{P}^N}(1)|$ is co-pluripolar.
\end{theorem}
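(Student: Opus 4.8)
The plan is to globalise the problem over the universal hyperplane section and then convert it into a statement about plurisubharmonicity on the parameter space. Write $B:=|\mathcal{O}_{\mathbb{P}^N}(1)|$ and let $\mathcal{H}\subseteq\mathbb{P}^N\times B$ be the incidence divisor $\{(y,H):y\in H\}$. Pulling back along $p$ gives $Y:=X\times_{\mathbb{P}^N}\mathcal{H}$ with its two projections $\pi\colon Y\to X$ and $q\colon Y\to B$. Then $\pi$ is a $\mathbb{P}^{N-1}$-bundle, hence a smooth morphism with $Y$ a smooth projective variety, and for every $H\in B$ the second projection induces an isomorphism $q^{-1}(H)\cong p^{-1}(H)=H'$. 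Set $\psi:=\pi^*\varphi$. By \cref{lma:pull-backmis} applied to $\pi$ we have $\mathcal{I}(\psi)=\pi^*\mathcal{I}(\varphi)$ on $Y$; since pullback of sheaves is functorial, restricting along the closed immersion $q^{-1}(H)\hookrightarrow Y$ and identifying $q^{-1}(H)$ with $H'\subseteq X$ gives $\mathcal{I}(\psi)|_{q^{-1}(H)}=\mathcal{I}(\varphi)|_{H'}$ and $\psi|_{q^{-1}(H)}=\varphi|_{H'}$. The point of this reduction is that $\mathcal{I}(\psi)$ is the pull-back of a fixed coherent sheaf on $X$, so it behaves well in families, whereas the genuinely fibrewise object $\mathcal{I}(\varphi|_{H'})$, whose jumps we must bound, does not.

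First I would discard a pluripolar part of $B$. The case $\dim\overline{p(X)}=0$ being trivial, assume $\dim\overline{p(X)}\ge1$; by Bertini \cite{Jou80} applied to the base-point-free system $|p^*\mathcal{O}_{\mathbb{P}^N}(1)|$, the fibre $H'$ is smooth for $H$ in a Zariski dense open $U_1\subseteq B$, and by generic flatness $\pi^*\mathcal{I}(\varphi)$ is flat over $q$ over a smaller Zariski dense open $U\subseteq U_1$; the complements removed are proper Zariski closed, hence pluripolar. Write $V:=q^{-1}(U)$, so $q\colon V\to U$ is smooth and projective. Fix an ample line bundle $A_0$ on $X$ with a smooth metric $h_0$ of curvature $\ge\varepsilon\omega_X$, write $dd^c\varphi\ge-C\omega_X$, and fix $m_0\gg0$ so that for all $m\ge m_0$: (i) the singular metric $h_0^m e^{-\varphi}$ on $A_0^m$ is semipositively curved, with $\mathcal{I}(h_0^m e^{-\varphi})=\mathcal{I}(\varphi)$; (ii) $R^iq_*\big(\omega_{V/U}\otimes\pi^*A_0^m\otimes\mathcal{I}(\psi)\big)=0$ for $i>0$. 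For $m\ge m_0$ set $\mathcal{E}_m:=q_*\big(\omega_{V/U}\otimes\pi^*A_0^m\otimes\mathcal{I}(\psi)\big)$. Since $\mathcal{I}(\psi)=\pi^*\mathcal{I}(\varphi)$ is flat over $U$ and (ii) holds, $\mathcal{E}_m$ is locally free on $U$, compatibly with base change, with fibre at $H$ naturally $H^0\big(H',\omega_{H'}\otimes A_0^m|_{H'}\otimes\mathcal{I}(\varphi)|_{H'}\big)$; let $r_m(H)$ denote its rank, locally constant on $U$. Inside this fibre sits $H^0\big(H',\omega_{H'}\otimes A_0^m|_{H'}\otimes\mathcal{I}(\varphi|_{H'})\big)$, of dimension $r_m'(H)\le r_m(H)$, consisting of the sections $s$ with $\int_{H'}|s|^2_{h_0^m}e^{-\varphi}<\infty$ (using that $\mathcal{I}(\varphi|_{H'})\subseteq\mathcal{I}(\varphi)|_{H'}$, which holds by Ohsawa--Takegoshi as recalled in the introduction). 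By Serre's theorem, $\mathcal{I}(\varphi|_{H'})=\mathcal{I}(\varphi)|_{H'}$ holds if and only if $r_m'(H)=r_m(H)$ for all $m\ge m_0$ (the non-trivial implication using that for $m$ large enough both sheaves twisted by $A_0^m|_{H'}$ become globally generated with vanishing first cohomology). Consequently $B\setminus\mathcal{G}\subseteq(B\setminus U)\cup\bigcup_{m\ge m_0}\{H\in U:r_m'(H)<r_m(H)\}$, and it remains to show each set on the right is pluripolar.

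The analytic core is the following. Fix $m\ge m_0$ and a connected open $W\subseteq U$ over which $\mathcal{E}_m$ is free; choose a frame $s_1,\dots,s_r$ ($r=r_m$), regarded as holomorphic sections of $\omega_{V/U}\otimes\pi^*A_0^m$ over $q^{-1}(W)$ restricting to a basis of each fibre of $\mathcal{E}_m$. For $H\in W$ set $\delta(H):=\det\big(\int_{H'}\langle s_i,s_j\rangle_{h_0^m}e^{-\varphi}\big)_{1\le i,j\le r}\in(0,+\infty]$, the Gram determinant (its entries lie in $\mathbb{C}\cup\{+\infty\}$). Since the $s_i(H)$ form a basis of the corresponding $H^0$ and $e^{-\varphi}>0$ almost everywhere, one checks that $\delta(H)$ is finite — and then necessarily in $(0,+\infty)$, by positive definiteness — exactly when every $s_i(H)$ is square integrable, that is, exactly when $r_m'(H)=r_m(H)$; otherwise $\delta(H)=+\infty$. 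Now $-\log\delta$ is the local weight of the determinant of the $L^2$-metric on the direct image $\mathcal{E}_m$, in the frame $s_1\wedge\dots\wedge s_r$. Because $h_0^m e^{-\varphi}$ pulls back to a semipositively curved singular metric on $\pi^*A_0^m$, Berndtsson's theorem on the positivity of $L^2$-metrics on direct images — in the form valid for singular metrics and multiplier ideal sheaves — together with Raufi's result \cite{Rau15} that the determinant of a positively curved singular Hermitian metric is again positively curved, shows that $-\log\delta$ is plurisubharmonic on $W$, or identically $-\infty$. The elementary ``almost every $H$'' statement — that $\mathcal{I}(\varphi)|_{H'}=\mathcal{I}(\varphi|_{H'})$ for Lebesgue-a.e.\ $H$, which follows from \cref{lma:pull-backmis} and the coarea formula applied to $q$, exactly as in the proof of that lemma — guarantees $r_m'=r_m$ on a set of full measure, so $-\log\delta\not\equiv-\infty$; hence $\{H\in W:r_m'(H)<r_m(H)\}=\{H\in W:-\log\delta(H)=-\infty\}$ is pluripolar. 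Covering $U$ by countably many such $W$ and letting $m$ range over $\{m_0,m_0+1,\dots\}$, the union $\bigcup_{m}\{H\in U:r_m'(H)<r_m(H)\}$ is a countable union of pluripolar sets, hence pluripolar by Josefson's theorem; adding the pluripolar set $B\setminus U$ produces a pluripolar $\Sigma\subseteq B$ with $\mathcal{G}\supseteq B\setminus\Sigma$, which is the assertion.

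The main obstacle is the last step: getting Berndtsson's plurisubharmonic-variation theorem to apply in this singular setting. One needs the $L^2$-metric on $\mathcal{E}_m$ — which is genuinely singular, infinite in some directions precisely over the bad hyperplanes — to be a positively curved singular Hermitian metric in the sense recalled in the paper, which is why the twist by $A_0^m$ with $m\gg0$ (forcing semipositivity of the curvature of the pulled-back metric) is essential; and one needs the determinant of such a metric to remain positively curved. The remaining ingredients — generic flatness, relative Serre vanishing with a uniform $m_0$, the Serre-regularity reduction, the device of taking a countable union over $m$ to sidestep any non-uniformity in such bounds, and the use of the easy a.e.\ statement to exclude the possibility that an entire open subset of $B$ is bad — are routine.
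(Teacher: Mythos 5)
Your proposal is correct and follows essentially the same route as the paper: the universal hyperplane section, the identification of the direct-image fibre with $H^0$ of $\omega_{H'}\otimes A_0^m|_{H'}\otimes\mathcal{I}(\varphi)|_{H'}$ via \cref{lma:pull-backmis}, the positivity of the determinant of the $L^2$ (Hodge) metric on the direct image forcing the jump locus to be pluripolar, and a countable family of twists to pass from global sections back to sheaves. The only cosmetic differences are that you obtain local freeness and base change via generic flatness plus relative Serre vanishing where the paper invokes Koll\'ar's torsion-freeness theorem, and that you make explicit the non-degeneracy $-\log\delta\not\equiv-\infty$ which the paper leaves implicit in the assertion that the Hodge metric is a genuine positive singular Hermitian metric.
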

\begin{remark}
Here and in the sequel, we slightly abuse the notation by writing $H\cap X$ for $p^{-1}H$, the scheme-theoretic inverse image of $H$. In other words, $H\cap X:=H\times_{\mathbb{P}^N} X$.

By definition, any $H\in |\mathcal{O}_{\mathbb{P}^N}(1)|$ such that $p^{-1}H=\emptyset$ lies in $\mathcal{G}$.
\end{remark}

We briefly sketch the argument. We need to show that for quasi-every $H\in |\mathcal{O}_{\mathbb{P}^N}(1)|$, the restriction formula \eqref{eq:IresH} holds. A standard argument in algebraic geometry allows us to reduce the proof of \eqref{eq:IresH} to proving the corresponding equality on global sections, after tensoring with a sufficiently ample line bundle $L$. In this case, we group the pairs consisting of $H\in \Lambda$ and points on $H\cap X$ as a single fibration $\pi_1:U\rightarrow \Lambda$. The theory of positivity of direct images allows us to construct a coherent sheaf $\mathcal{F}$ on $\Lambda$ endowed with a positive metric $h_{\mathcal{H}}$ out of $\pi_1$ and $\varphi$. By the construction of $h_{\mathcal{H}}$, the locus where the restriction formula \eqref{eq:IresH} fails is contained in the singular locus of $h_{\mathcal{H}}$, which finishes the proof.

\begin{proof}

Take an ample line bundle $L$ with a smooth Hermitian metric $h$ such that $c_1(L,h)+\ddc\varphi\geq 0$, where $c_1(L,h)$ is the first Chern form of $(L,h)$, namely the curvature form of $h$. 
Let $\mathcal{L}$ be the invertible sheaf corresponding to $L$.
We introduce $\Lambda:=|\mathcal{O}_{\mathbb{P}^N}(1)|$ to simplify our notations.

\begin{step}\label{ste:1}
\end{step}
We prove that the following set is co-pluripolar:
\[
\begin{split}
\mathcal{G}_{\mathcal{L}}:=\left\{H\in \Lambda: H\cap X \text{ is smooth and } H^0\left(H\cap X,\omega_{H\cap X}\otimes \mathcal{L}|_{H\cap X}\otimes \mathcal{I}(\varphi|_{H\cap X})\right)= \right.\\
\left. H^0\left(H\cap X,\omega_{H\cap X}\otimes \mathcal{L}|_{H\cap X}\otimes \mathcal{I}(\varphi)|_{H\cap X}\right)\right\}\,.
\end{split}
\]
Here $\omega_{H\cap X}$ denotes the dualizing sheaf of $H\cap X$.

Let $U\subseteq \Lambda\times X$ be the closed subvariety whose $\mathbb{C}$-points correspond to pairs $(H,x)\in \Lambda\times X$ with $p(x)\in H$.  
Let $\pi_1:U\rightarrow \Lambda$ be the natural projection. We may assume that $\pi_1$ is surjective, as otherwise there is nothing to prove.

Observe that $U$ is a local complete intersection scheme by \emph{Krulls Hauptidealsatz} and \emph{a fortiori} a Cohen--Macaulay scheme. It follows from miracle flatness \cite[Theorem~23.1]{Mat89} that the natural projection $\pi_2:U\rightarrow X$ is flat. As the fibers of $\pi_2$ over closed points of $X$ are isomorphic to $\mathbb{P}^{N-1}$, it follows that $\pi_2$ is smooth. Thus, $U$ is smooth as well.

In the following, we will construct pluripolar sets $\Sigma_1\subseteq \Sigma_2 \subseteq \Sigma_3\subseteq \Sigma_4\subseteq \Lambda$ such that the behaviour of $\pi_1$ is improved successively on the complement of $\Sigma_i$.

\textbf{Step~\ref*{ste:1}.1}
The usual Bertini theorem shows that there is a proper Zariski closed set $\Sigma_1\subseteq \Lambda$ such that $\pi_1$ has smooth fibres outside $\Sigma_1$. This is slightly more general than the version that one finds in \cite{Har}, see \cite[Théorème~6.3]{Jou80} for a proof.

\textbf{Step~\ref*{ste:1}.2}
By Koll\'ar's torsion-free theorem \cite[Theorem~C]{FM21}, 
\[
\mathcal{F}^i:=R^i\pi_{1*}\left(\omega_{U/\Lambda}\otimes \pi_2^*\mathcal{L}\otimes \mathcal{I}(\pi_2^*\varphi) \right)
\]
is torsion-free for all $i$. Here $\omega_{U/\Lambda}$ denotes the relative dualizing sheaf of the morphism $U\rightarrow \Lambda$.
Thus, there is a proper Zariski closed set $\Sigma_2\subseteq \Lambda$ such that 
\begin{enumerate}
    \item $\Sigma_2\supseteq \Sigma_1$.
    \item The $\mathcal{F}^i$'s are locally free outside $\Sigma_2$.
    \item $\omega_{U/\Lambda}\otimes \pi_2^*\mathcal{L}\otimes \mathcal{I}(\pi_2^*\varphi)$ is $\pi_1$-flat on $\pi_1^{-1}(\Lambda\setminus \Sigma_2)$ \cite[Théorème~6.9.1]{EGAIV-2}. 
\end{enumerate}
We write $\mathcal{F}=\mathcal{F}^0$.
 By cohomology and base change \cite[Theorem~III.12.11]{Har}, for any $H\in \Lambda\setminus \Sigma_2$,
 the fibre $\mathcal{F}|_H$ of $\mathcal{F}$ is given by 
 \[
\mathcal{F}|_H= H^0\left(\pi_{1,H},\omega_{U/\Lambda}|_{\pi_{1,H}}\otimes \pi_2^*\mathcal{L}|_{\pi_{1,H}}\otimes \mathcal{I}(\pi_2^*\varphi)|_{\pi_{1,H}} \right)\,.
\]
Here $\pi_{1,H}$ denotes the fibre of $\pi_1$ at $H$.

\textbf{Step~\ref*{ste:1}.3} In order to proceed, we need to make use of the Hodge metric $h_{\mathcal{H}}$ on $\mathcal{F}$ defined in \cite{HPS18}. We briefly recall its definition in our setting. 
By \cite[Section~22]{HPS18}, we can find a proper Zariski closed set $\Sigma_3\subseteq \Lambda$ such that
\begin{enumerate}
    \item $\Sigma_3\supseteq \Sigma_2$.
    \item $\pi_1$ is submersive outside $\Sigma_3$.
    \item Both $\mathcal{F}$ and $\pi_{1*}\left(\omega_{U/\Lambda}\otimes \pi_2^*\mathcal{L} \right)/\mathcal{F}$  are locally free outside $\Sigma_3$.
    \item For each $i$, 
    \[
    R^i\pi_{1*}\left(\omega_{U/\Lambda}\otimes \pi_2^*\mathcal{L} \right)
    \]
    is locally free outside $\Sigma_3$.
\end{enumerate}
Then for any $H\in \Lambda\setminus \Sigma_3$,
\[
H^0(H\cap X,\omega_{H\cap X}\otimes \mathcal{L}|_{H\cap X}\otimes \mathcal{I}(\varphi|_{H\cap X}))\subseteq \mathcal{F}|_H\subseteq H^0(H\cap X,\omega_{H\cap X}\otimes \mathcal{L}|_{H\cap X})\,.
\]
See \cite[Lemma~22.1]{HPS18}.

Now we can give the definition of the Hodge metric on $\Lambda\setminus \Sigma_3$.
Given any $H\in \Lambda\setminus \Sigma_3$, any $\alpha\in \mathcal{F}|_H$, the Hodge metric is defined as
\[
h_{\mathcal{H}}(\alpha,\alpha):=\int_{X\cap H} |\alpha|^2_{he^{-\varphi}|_{X\cap H}}\in [0,\infty]\,.
\]
Observe that $h_{\mathcal{H}}(\alpha,\alpha)<\infty$ if and only if $\alpha\in H^0(H\cap X,\omega_{H\cap X}\otimes \mathcal{L}|_{H\cap X}\otimes \mathcal{I}(\varphi|_{H\cap X}))$. Moreover, $h_{\mathcal{H}}(\alpha,\alpha)>0$ if $\alpha\neq 0$.
It is shown in  \cite{HPS18} (c.f. \cite[Theorem~3.3.5]{PT18}) that $h_{\mathcal{H}}$ is indeed a singular Hermitian metric and it extends to a positive metric on $\mathcal{F}$.

\textbf{Step~\ref*{ste:1}.4}.
The determinant $\det h_{\mathcal{H}}$ is singular at all $H\in \Lambda\setminus \Sigma_3$ such that 
\[
H^0(H\cap X,\omega_{H\cap X}\otimes \mathcal{L}|_{H\cap X}\otimes \mathcal{I}(\varphi|_{H\cap X}))\neq \mathcal{F}|_H\,.
\]
As the map $\pi_2$ is smooth, we have  $\pi_2^*\mathcal{I}(\varphi)= \mathcal{I}(\pi_2^*\varphi)$ by \cref{lma:pull-backmis}. 
Under the identification $\pi_{1,H}\cong H\cap X$, we have
\[
\pi_2^*\mathcal{I}(\varphi)|_{\pi_{1,H}}\cong \mathcal{I}(\varphi)|_{H\cap X}\,.
\]
Thus we have the following inclusions:
\[
H^0(H\cap X,\omega_{H\cap X}\otimes \mathcal{L}|_{H\cap X}\otimes \mathcal{I}(\varphi|_{H\cap X}))\subseteq H^0(H\cap X,\omega_{H\cap X}\otimes \mathcal{L}|_{H\cap X}\otimes \mathcal{I}(\varphi)|_{H\cap X})= \mathcal{F}|_H\,.
\]
Recall that the first inclusion follows from the Ohsawa--Takegoshi $L^2$-extension theorem.
Hence $\det h_{\mathcal{H}}$ is singular at all $H\in |\mathcal{O}_{\mathbb{P}^N}(1)|\setminus \Sigma_3$ such that
\[
H^0(H\cap X,\omega_{H\cap X}\otimes \mathcal{L}|_{H\cap X}\otimes \mathcal{I}(\varphi|_{H\cap X}))\neq H^0(H\cap X,\omega_{H\cap X}\otimes \mathcal{L}|_{H\cap X}\otimes \mathcal{I}(\varphi)|_{X\cap H})\,.
\]
Let $\Sigma_4$ be the union of $\Sigma_3$ and the set of all such $H$.
Since the Hodge metric $h_{\mathcal{H}}$ is positive (\cite[Theorem~3.3.5]{PT18} and \cite[Theorem~21.1]{HPS18}),
its determinant $\det h_{\mathcal{H}}$ is also positive (\cite[Proposition~1.3]{Rau15} and \cite[Proposition~25.1]{HPS18}), it follows that $\Sigma_4$ is pluripolar. As a consequence, $\mathcal{G}_{\mathcal{L}}$ is co-pluripolar.

\begin{step}\label{ste:2}
\end{step}

Fix an ample invertible sheaf $\mathcal{S}$ on $X$. The same result holds with $\mathcal{L}\otimes \mathcal{S}^{\otimes a}$ in place of $\mathcal{L}$. Thus the set
\[
A:=\bigcap_{a=0}^{\infty}\mathcal{G}_{\mathcal{L}\otimes \mathcal{S}^{\otimes a}}
\]
is co-pluripolar.
For each $H\in W$ such that $X\cap H$ is smooth and $\mathcal{I}(\varphi|_{X\cap H})\neq \mathcal{I}(\varphi)|_{X\cap H}$, let $\mathcal{K}$ be the following cokernel:
\[
0\rightarrow \mathcal{I}(\varphi|_{X\cap H})\rightarrow \mathcal{I}(\varphi)|_{X\cap H}\rightarrow \mathcal{K}\rightarrow 0\,.
\]
By Serre vanishing theorem, taking $a$ large enough, we may guarantee that
\[
H^1(X\cap H,\omega_{X\cap H}\otimes (\mathcal{L}\otimes \mathcal{S}^{\otimes a})|_{X\cap H}\otimes \mathcal{I}(\varphi|_{X\cap H}))=0
\]
and
\[
H^0(X\cap H,\omega_{X\cap H}\otimes (\mathcal{L}\otimes \mathcal{S}^{\otimes a})|_{X\cap H}\otimes \mathcal{K})\neq 0\,.
\]
Then
\[
H^0(X\cap H,\omega_{X\cap H}\otimes(\mathcal{L}\otimes \mathcal{S}^{\otimes a})|_{X\cap H}\otimes\mathcal{I}(\varphi|_{X\cap H}))\neq H^0(X\cap H,\omega_{X\cap H}\otimes(\mathcal{L}\otimes \mathcal{S}^{\otimes a})|_{X\cap H}\otimes\mathcal{I}(\varphi)|_{X\cap H})\,.
\]
Thus $H\not\in A$. We conclude that $\mathcal{G}$ is co-pluripolar.
\end{proof}

\begin{remark}
As pointed out by the referee, in \cite{FM21}, Fujino and Matsumura also treated the case when $X$ is not projective. It is of interest to understand if \cref{thm:Bert} can be extended to non-projective complex manifolds as well.
\end{remark}

Note that the argument for $\pi:U\rightarrow \Lambda$ in the proof of \cref{thm:Bert} works for more general fibrations.
With essentially the same proof, we can similarly prove an analytic Bertini type theorem for fibrations. 
\begin{corollary}\label{cor:fibra}
Let $\pi:U\rightarrow W$ be a surjective morphism of projective varieties. Let $(L,\phi)$ be a Hermitian pseudo-effective line bundle on $U$, namely $L$ is a holomorphic line bundle on $U$ and $\phi$ is a plurisubharmonic metric on $L$. Then there is a pluripolar subset $\Sigma\subseteq W$ such that for all $w\in W\setminus \Sigma$, $U_w:=\pi^{-1}(w)$ is smooth and we have $\mathcal{I}(\phi|_{U_w})=\mathcal{I}(\phi)|_{U_w}$.
\end{corollary}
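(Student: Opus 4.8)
The plan is to rerun the proof of \cref{thm:Bert}, replacing the universal family $\pi_1\colon U\to\Lambda=|\mathcal{O}_{\mathbb{P}^N}(1)|$ by the given fibration $\pi\colon U\to W$ and replacing the Hermitian pseudo-effective line bundle $(\pi_2^*\mathcal{L},\pi_2^*(he^{-\varphi}))$ on the old $U$ by $(L,\phi)$ on the new $U$. Indeed, the only features of $\pi_1$ used in \cref{thm:Bert} are that its source is a smooth projective variety, that it is surjective, and that its source carries a Hermitian pseudo-effective line bundle; the second projection $\pi_2$ and \cref{lma:pull-backmis} disappear from the argument once the weight already lives on $U$. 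So the first thing I would do is a reduction: by Hironaka's resolution of singularities, choosing a resolution of $W$ and then a resolution of the main component of the base change (equivalently, resolving $U$ compatibly), one arranges that $\pi$ is a surjective morphism of smooth projective manifolds. This is harmless for the statement because such resolutions are birational, hence isomorphisms over dense Zariski-open sets: a proper birational morphism carries pluripolar sets to pluripolar sets (a proper Zariski-closed set being pluripolar), and over the locus where the resolution is an isomorphism the fibres $U_w$ and the Nadel ideal $\mathcal{I}(\phi)$, together with its fibrewise restriction, are unchanged. From now on $U$ and $W$ are smooth; generic smoothness in characteristic zero then provides a proper Zariski-closed $\Sigma_1\subseteq W$ outside which $\pi$ is smooth, playing the role of Step~1.1.

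Next I would fix, once and for all, an ample invertible sheaf $\mathcal{S}$ on $U$ with a smooth positively curved metric $h_{\mathcal{S}}$, and observe that for every $a\geq 0$ the pair $(L\otimes\mathcal{S}^{\otimes a},\phi+ah_{\mathcal{S}})$ is again a Hermitian pseudo-effective line bundle on $U$, with $\mathcal{I}(\phi+ah_{\mathcal{S}})=\mathcal{I}(\phi)$. For each fixed $a$ I would then transcribe Steps~1.2--1.4 verbatim with $\omega_{U/W}\otimes L\otimes\mathcal{S}^{\otimes a}\otimes\mathcal{I}(\phi)$ in place of $\omega_{U/\Lambda}\otimes\pi_2^*\mathcal{L}\otimes\mathcal{I}(\pi_2^*\varphi)$: Koll\'ar's torsion-free theorem \cite[Theorem~C]{FM21} yields a proper Zariski-closed $\Sigma_2\supseteq\Sigma_1$ outside which $\mathcal{F}^i:=R^i\pi_*(\omega_{U/W}\otimes L\otimes\mathcal{S}^{\otimes a}\otimes\mathcal{I}(\phi))$ is locally free and the sheaf above is $\pi$-flat; cohomology and base change then identifies, for $w\notin\Sigma_2$, the fibre $\mathcal{F}|_w$ of $\mathcal{F}:=\mathcal{F}^0$ with $H^0(U_w,\omega_{U_w}\otimes(L\otimes\mathcal{S}^{\otimes a})|_{U_w}\otimes\mathcal{I}(\phi)|_{U_w})$, using $\omega_{U/W}|_{U_w}\cong\omega_{U_w}$ on smooth fibres; \cite[Section~22]{HPS18} produces a further proper Zariski-closed $\Sigma_3\supseteq\Sigma_2$ giving the needed local freeness and the inclusions $H^0(U_w,\omega_{U_w}\otimes(L\otimes\mathcal{S}^{\otimes a})|_{U_w}\otimes\mathcal{I}(\phi|_{U_w}))\subseteq\mathcal{F}|_w\subseteq H^0(U_w,\omega_{U_w}\otimes(L\otimes\mathcal{S}^{\otimes a})|_{U_w})$, and the Hodge metric $h_{\mathcal{H}}$ of \cite{HPS18} is a positive singular Hermitian metric on $\mathcal{F}$ for which $h_{\mathcal{H}}(\alpha,\alpha)<\infty$ is equivalent to $\alpha$ lying in the leftmost space. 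As in Step~1.4, for $w\notin\Sigma_3$ the Ohsawa--Takegoshi inclusion $\mathcal{I}(\phi|_{U_w})\subseteq\mathcal{I}(\phi)|_{U_w}$ and the base-change identity force $\det h_{\mathcal{H}}$ to be singular at every $w$ where the two $H^0$-spaces fail to coincide; since $h_{\mathcal{H}}$ is positive, so is $\det h_{\mathcal{H}}$ \cite[Proposition~1.3]{Rau15}, \cite[Proposition~25.1]{HPS18}, so the set of such $w$ is pluripolar, and hence the set $\mathcal{G}_a$ of $w\in W$ with $U_w$ smooth and the $H^0$-equality at level $a$ is co-pluripolar.

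To finish I would run the analogue of Step~2. The intersection $A:=\bigcap_{a\geq 0}\mathcal{G}_a$ is co-pluripolar, being a countable intersection of co-pluripolar sets. If $w\in W$ is such that $U_w$ is smooth but $\mathcal{I}(\phi|_{U_w})\neq\mathcal{I}(\phi)|_{U_w}$, let $\mathcal{K}$ be the cokernel of the inclusion $\mathcal{I}(\phi|_{U_w})\hookrightarrow\mathcal{I}(\phi)|_{U_w}$; then $\mathcal{K}\neq 0$, and Serre vanishing on the projective variety $U_w$ gives, for $a$ large depending on $w$, both $H^1(U_w,\omega_{U_w}\otimes(L\otimes\mathcal{S}^{\otimes a})|_{U_w}\otimes\mathcal{I}(\phi|_{U_w}))=0$ and $H^0(U_w,\omega_{U_w}\otimes(L\otimes\mathcal{S}^{\otimes a})|_{U_w}\otimes\mathcal{K})\neq 0$, so that the two $H^0$-spaces at level $a$ differ and $w\notin\mathcal{G}_a$, hence $w\notin A$. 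Therefore $A$ is contained in the set of $w$ for which $U_w$ is smooth and $\mathcal{I}(\phi|_{U_w})=\mathcal{I}(\phi)|_{U_w}$, which is thus co-pluripolar; taking $\Sigma$ to be its complement and transporting it back down through the resolution of $W$ completes the proof.

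I expect essentially all the content beyond \cref{thm:Bert} to lie in the first paragraph. Once $U$ and $W$ are smooth, Paragraphs~2 and~3 are a line-by-line transcription of the proof of \cref{thm:Bert}, with no new analytic input. The delicate point is therefore the reduction to the smooth case: one must set up the resolutions so that surjectivity of $\pi$, the identification $\omega_{U/W}|_{U_w}\cong\omega_{U_w}$ over the relevant locus, the compatibility of $\mathcal{I}(\phi)$ and of the fibres $U_w$ with the resolution, and the descent of pluripolarity from the resolution of $W$ to $W$ are simultaneously respected.
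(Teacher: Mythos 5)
Your overall strategy is exactly the paper's: the paper proves this corollary by simply observing that the argument for $\pi_1:U\to\Lambda$ in the proof of \cref{thm:Bert} never uses the specific form of the universal family, and your second and third paragraphs are a faithful transcription of Steps~1.1--1.4 and Step~2 with $(L,\phi)$ in place of $(\pi_2^*\mathcal{L},\pi_2^*(he^{-\varphi}))$ and an ample $\mathcal{S}$ on $U$ replacing the pullback of an ample sheaf from $X$. That part is correct and requires no new input beyond \cref{thm:Bert}.

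The gap is in the reduction to smooth $U$ and $W$, which you rightly flag as the delicate point but then dispatch too quickly. Let $\nu:\tilde U\to U$ be the resolution, an isomorphism over $U\setminus Z$. Your justification --- ``over the locus where the resolution is an isomorphism the fibres $U_w$ \dots are unchanged'' --- concerns an open subset of $U$, not of $W$: the fibre of the resolved family over $\tilde w$ agrees with $U_w$ only when $U_w\cap Z=\emptyset$, i.e.\ only for $w\notin\pi(Z)$. If $\pi(Z)$ is a proper Zariski-closed subset of $W$ (for instance if $U$ is smooth over the generic point of each component of $W$), then $\pi(Z)$ is pluripolar and can be absorbed into $\Sigma$, and your argument closes. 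But if $Z$ dominates $W$ --- which happens exactly when $U^{\mathrm{sing}}$ dominates $W$ --- then every fibre meets the modified locus, the fibres of $\tilde U\to\tilde W$ genuinely differ from the $U_w$, and the conclusion for the resolved family does not transfer back. One cannot simply declare the statement vacuous in that case: the fibres of the original family can still all be smooth even though $U^{\mathrm{sing}}$ dominates $W$ (project a quadric cone in $\mathbb{P}^3$ from a point; every fibre is a smooth line through the vertex). To be fair, the paper is silent on this and its statement is most naturally read with $U$ smooth, in which case generic smoothness applies directly and no resolution of $U$ is needed; but as written your reduction does not cover the general case you set out to handle. The descent of pluripolarity through the resolution $\mu:\tilde W\to W$ itself is fine, since $\mu(\tilde\Sigma)\subseteq\mu\bigl(\tilde\Sigma\cap\mu^{-1}(W_0)\bigr)\cup(W\setminus W_0)$ with $W\setminus W_0$ a proper Zariski-closed, hence pluripolar, set.
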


\begin{corollary}\label{cor:Irestunif}
Let $X$ be a projective manifold of pure dimension $n\geq 1$.  Let $\Lambda$ be a base-point free linear system. Let $\varphi$ be a quasi-psh function on $X$.
Then there is a pluripolar subset $\Sigma\subseteq \Lambda$ such that for any $H\in \Lambda\setminus \Sigma$ and any real number $k>0$,
\begin{equation}\label{eq:Ikphi}
\mathcal{I}(k\varphi|_H)=\mathcal{I}(k\varphi)|_H
\end{equation}
and we have a short exact sequence for all $k>0$,
\begin{equation}\label{eq:Ikphires}
0\rightarrow \mathcal{I}(k\varphi)\otimes \mathcal{O}_X(-H)\rightarrow \mathcal{I}(k\varphi)\rightarrow \mathcal{I}(k\varphi|_H)\rightarrow 0\,.
\end{equation}
\end{corollary}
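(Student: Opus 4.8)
The plan is to treat rational multiples of $\varphi$ one at a time via \cref{thm:Bert}, and then to upgrade to arbitrary real $k$ by pushing the parameter slightly upward through the strong openness theorem. If $X$ is disconnected I would argue on each of its finitely many connected components and intersect the exceptional sets, so I may assume $X$ connected. A base-point-free linear system $\Lambda$ induces a morphism $p:X\to\mathbb P^N$ identifying $\Lambda$ with $|\mathcal O_{\mathbb P^N}(1)|$, the member corresponding to $H$ being $p^{-1}H$, so \cref{thm:Bert} applies. For each $k\in\mathbb Q_{>0}$ the function $k\varphi$ is quasi-psh, and \cref{thm:Bert} applied to $k\varphi$ yields a co-pluripolar set $\mathcal G_k\subseteq\Lambda$ such that for $H\in\mathcal G_k$ the hypersurface $H':=H\cap X$ is smooth and $\mathcal I(k\varphi|_{H'})=\mathcal I(k\varphi)|_{H'}$. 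Since co-pluripolar sets are stable under countable intersection, $\Sigma:=\Lambda\setminus\bigcap_{k\in\mathbb Q_{>0}}\mathcal G_k$ is pluripolar, and it remains to prove \eqref{eq:Ikphi} and \eqref{eq:Ikphires} for $H\notin\Sigma$ and every real $k>0$.

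Next I would fix such an $H$ and a real $k>0$. The inclusion $\mathcal I(k\varphi|_{H'})\subseteq\mathcal I(k\varphi)|_{H'}$ holds for any smooth $H'$ by the Ohsawa--Takegoshi $L^2$-extension theorem, exactly as in Step~\ref*{ste:1}.4 above. For the reverse inclusion, take $x\in H'$ and a germ $F\in\mathcal I(k\varphi)_x$; by the strong openness theorem of Guan--Zhou, together with Noetherianity of $\mathcal O_{X,x}$, there is a real $k''>k$ with $F\in\mathcal I(k''\varphi)_x$. Choosing a rational $k'$ with $k<k'<k''$, one gets $F\in\mathcal I(k'\varphi)_x$, hence $F|_{H'}\in\mathcal I(k'\varphi)|_{H',x}=\mathcal I(k'\varphi|_{H'})_x\subseteq\mathcal I(k\varphi|_{H'})_x$, where the equality uses $H\in\mathcal G_{k'}$ and the last inclusion uses $k'>k$. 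As $x$ and $F$ were arbitrary this yields $\mathcal I(k\varphi)|_{H'}\subseteq\mathcal I(k\varphi|_{H'})$, completing the proof of \eqref{eq:Ikphi}.

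For \eqref{eq:Ikphires}, I would tensor the structure sequence $0\to\mathcal O_X(-H)\to\mathcal O_X\to\mathcal O_{H'}\to0$ with the coherent sheaf $\mathcal I(k\varphi)$. The resulting right-exact sequence $\mathcal I(k\varphi)\otimes\mathcal O_X(-H)\to\mathcal I(k\varphi)\to\mathcal I(k\varphi)\otimes\mathcal O_{H'}\to0$ is in fact short exact, because its left-hand map is multiplication by a local defining section of $H$, which is a non-zero-divisor on the torsion-free sheaf $\mathcal I(k\varphi)\subseteq\mathcal O_X$; and its cokernel $\mathcal I(k\varphi)\otimes\mathcal O_{H'}=\mathcal I(k\varphi)|_{H'}$ is identified with $\mathcal I(k\varphi|_{H'})$ by \eqref{eq:Ikphi}. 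The main obstacle is the rational-to-real upgrade: one cannot intersect over all real $k$, and a countable dense set of values of $k$ would suffice only if $\bigcap_{k'<k}\mathcal I(k'\varphi|_{H'})=\mathcal I(k\varphi|_{H'})$, i.e.\ left-continuity in the parameter, which fails precisely at jumping numbers. Pushing $k$ slightly \emph{upward} via strong openness circumvents this, and that is the only point where strong openness is used.
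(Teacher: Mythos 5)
Your treatment of \eqref{eq:Ikphi} is correct and is essentially the paper's own route: reduce to a countable set of exponents, apply \cref{thm:Bert} to each, and use strong openness in the one-sided, ``push $k$ slightly upward'' manner you describe to recover all real $k>0$.

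The gap is in \eqref{eq:Ikphires}, at the asserted identification $\mathcal I(k\varphi)\otimes\mathcal O_{H'}=\mathcal I(k\varphi)|_{H'}$. For an ideal sheaf $\mathcal I\subseteq\mathcal O_X$ and a hypersurface $H'$ cut out locally by $s_H$, the natural map $\mathcal I\otimes\mathcal O_{H'}\to\mathcal I\cdot\mathcal O_{H'}=\mathcal I|_{H'}$ is surjective but not injective in general: its kernel is $\operatorname{Tor}_1^{\mathcal O_X}(\mathcal O_X/\mathcal I,\mathcal O_{H'})$, i.e.\ the $s_H$-torsion of $\mathcal O_X/\mathcal I$, which vanishes precisely when $H$ contains no associated point of $\mathcal O_X/\mathcal I$. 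Equivalently, exactness of \eqref{eq:Ikphires} in the middle amounts to $\mathcal I(k\varphi)\cap s_H\mathcal O_X=s_H\,\mathcal I(k\varphi)$, which is again the condition that $s_H$ be a non-zero-divisor on $\mathcal O_X/\mathcal I(k\varphi)$. Multiplier ideals can have embedded associated points (for instance isolated ones), and neither smoothness of $H\cap X$ nor \eqref{eq:Ikphi} prevents $H$ from passing through such a point, so the set $\Sigma$ you construct is not large enough to guarantee \eqref{eq:Ikphires}. This is exactly the point the paper addresses with its second condition: the ideals $\mathcal I(k\varphi)$, $k>0$, take only countably many distinct values (by openness each $k$ admits an interval of constancy to its right, so the level sets have nonempty interior and are pairwise disjoint), hence the union over all $k$ of their associated primes is countable, and the set $A$ of hyperplanes avoiding all of them is co-meager, in particular co-pluripolar. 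Enlarging your $\Sigma$ by $\Lambda\setminus A$ closes the argument; without that step, the proof of \eqref{eq:Ikphires} is incomplete.
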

\begin{proof}
First observe that by the strong openness theorem \cite{GZopen} in order to verify \eqref{eq:Ikphi} for all real $k>0$, it suffices to verify it for $k$ lying in a countable subset $K\subseteq \mathbb{R}_{>0}$.

Applying \cref{thm:Bert} to each $k\varphi$ with $k\in K$ and each connected component of $X$, we find that there is a pluripolar set $\Sigma_1\subseteq \Lambda$ such that for any $H\in\Lambda\setminus \Sigma_1$ and any $k\in K$, \eqref{eq:Ikphi} holds. On the other hand, the union of the sets of associated primes of $\mathcal{I}(k\varphi)$ for $k>0$ is a countable set, hence the set $A$ of $H\in \Lambda$ that avoids them is co-meager. It suffices to take $\Sigma=\Sigma_1\cup (\Lambda\setminus A)$.
\end{proof}
Following the terminology of \cite{DX22}, given quasi-psh functions $\varphi$ and $\psi$ on $X$, we say $\varphi\sim_{\mathcal{I}}\psi$ if for all real $k>0$, $\mathcal{I}(k\varphi)=\mathcal{I}(k\psi)$.
\begin{corollary}
Let $X$ be a projective manifold of pure dimension $n\geq 1$.
Let $\varphi$, $\psi$ be quasi-psh functions on $X$ such that $\varphi\sim_{\mathcal{I}}\psi$. Let $\Lambda$ be a base-point free linear system. Then there is a pluripolar subset $\Sigma\subseteq \Lambda$ such that for any $H\in \Lambda\setminus \Sigma$, $\varphi|_H$ and $\psi|_H$ are both quasi-psh functions on $H$ and we have $\varphi|_H\sim_{\mathcal{I}} \psi|_H$.
\end{corollary}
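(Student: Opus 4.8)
The plan is to deduce this immediately from \cref{cor:Irestunif}. First I would apply \cref{cor:Irestunif} separately to $\varphi$ and to $\psi$: both are quasi-psh on the pure-dimensional projective manifold $X$ and $\Lambda$ is base-point free, so the hypotheses are satisfied. This produces pluripolar subsets $\Sigma_\varphi,\Sigma_\psi\subseteq\Lambda$ such that for every $H\in\Lambda\setminus\Sigma_\varphi$ we have $\mathcal{I}(k\varphi|_H)=\mathcal{I}(k\varphi)|_H$ for all real $k>0$, with $\varphi|_H$ quasi-psh on $H$, and symmetrically for $\psi$ and $\Sigma_\psi$. Put $\Sigma:=\Sigma_\varphi\cup\Sigma_\psi$; it is again pluripolar, being a union of two pluripolar sets.

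Next I would fix $H\in\Lambda\setminus\Sigma$ and a real $k>0$. Since $\varphi\sim_{\mathcal{I}}\psi$, we have $\mathcal{I}(k\varphi)=\mathcal{I}(k\psi)$ as ideal sheaves on $X$, hence $\mathcal{I}(k\varphi)|_H=\mathcal{I}(k\psi)|_H$ as subsheaves of $\mathcal{O}_H$. Chaining this with the two restriction formulas above gives
\[
\mathcal{I}(k\varphi|_H)=\mathcal{I}(k\varphi)|_H=\mathcal{I}(k\psi)|_H=\mathcal{I}(k\psi|_H)\,.
\]
As $k>0$ was arbitrary, this is precisely $\varphi|_H\sim_{\mathcal{I}}\psi|_H$, and $\varphi|_H$, $\psi|_H$ are quasi-psh on $H$ by the choice of $\Sigma$. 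This completes the argument.

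Since the whole statement is bookkeeping on top of \cref{cor:Irestunif}, there is no serious obstacle; the one delicate point is the assertion, built into the choice of $\Sigma$, that the restricted functions are genuinely quasi-psh (not $\equiv-\infty$ on a component of $H$) for $H$ outside the exceptional set. This is automatic from the restriction formula \eqref{eq:Ikphi}: if $\varphi|_H\equiv-\infty$ on a component $H_0$ of $H\cap X$, then $\mathcal{I}(\varphi|_H)$ vanishes along $H_0$ whereas $\mathcal{I}(\varphi)|_H$ equals $\mathcal{O}_H$ on a dense open subset of $H_0$ --- for this one uses that the locus where $\mathcal{I}(\varphi)\neq\mathcal{O}_X$ is a proper closed subvariety of $X$ and that, $\Lambda$ being base-point free, those $H$ having a component of $H\cap X$ inside this locus form a set contained in a proper Zariski closed (hence pluripolar) subset of $\Lambda$, which we simply add to $\Sigma$. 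Hence \eqref{eq:Ikphi} fails for such $H$, so they already belong to $\Sigma_\varphi$. One may alternatively slice the quasi-psh function $\pi_1^*\varphi$ on the incidence variety of the proof of \cref{thm:Bert} to reach the same conclusion.
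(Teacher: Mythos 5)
Your argument is correct and is exactly the intended one: the paper states this corollary without proof as an immediate consequence of \cref{cor:Irestunif}, obtained by applying that result to $\varphi$ and $\psi$ separately, taking the union of the two pluripolar sets, and chaining $\mathcal{I}(k\varphi|_H)=\mathcal{I}(k\varphi)|_H=\mathcal{I}(k\psi)|_H=\mathcal{I}(k\psi|_H)$. Your extra care about $\varphi|_H$ not being identically $-\infty$ on a component is a reasonable and correct way to justify the quasi-psh assertion that the paper leaves implicit.
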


\section*{Competing Interests}
I declare that I have no competing interests related to this paper.


\printbibliography

\bigskip
  \footnotesize

  Mingchen Xia, \textsc{Department of Mathematics, Chalmers Tekniska Högskola, Göteborg}\par\nopagebreak
  \textit{Email address}, \texttt{xiam@chalmers.se}\par\nopagebreak
  \textit{Homepage}, \url{http://www.math.chalmers.se/~xiam/}.

\end{document}